\documentclass[a4paper, 12pt]{amsart}

  \pdfoutput=1
\usepackage{latexsym, amsmath, amssymb, amsthm, mathrsfs, bm,
  mathtools,comment,stmaryrd} \usepackage[mathscr]{eucal}
\usepackage{array} \usepackage{tikz}
\usetikzlibrary{arrows,shapes,positioning,calc,patterns,cd}
\usepackage{graphicx} \usepackage[T1]{fontenc} \usepackage{mathptmx}
\usepackage{microtype} 
\usepackage[colorlinks=true,linkcolor=blue,urlcolor=blue,
citecolor=blue]%
{hyperref}
  
\usepackage[inner=2.3cm,outer=2.3cm, bottom=3.3cm]{geometry}
\usepackage{bbold} \usepackage[initials]{amsrefs}
\usepackage[all,cmtip]{xy} \newtheorem{theorem}{Theorem}[section]
\newtheorem{theoremx}{Theorem}

\newtheorem{lemma}[theorem]{Lemma}
\newtheorem{corollary}[theorem]{Corollary}
\newtheorem{proposition}[theorem]{Proposition}
\theoremstyle{definition} \newtheorem{definition}[theorem]{Definition}

\theoremstyle{remark} \newtheorem{remark}[theorem]{Remark}

\DeclareMathOperator{\rate}{rate} 
\DeclareMathOperator{\edim}{edim}
\DeclareMathOperator{\reg}{reg}

\def\m{\mathfrak{m}} \def\a{\mathfrak{a}} \def\ZZ{\mathbb{Z}}
\def\FF{\mathbb{F}} \def\NN{\mathbb{N}}

\def\Tor{\mathop{\kern0pt\fam0Tor}\nolimits}

\newcommand{\PP}{\mathbb{P}} \newcommand{\ps}[1]{\llbracket {#1}
  \rrbracket} \newcommand{\soc}{\operatorname{soc}}
\DeclareMathOperator{\HF}{HF} \newcommand{\xx}{{\bf x}}
\newcommand{\yy}{{\bf y}} \renewcommand{\aa}{{\bf a}}
\newcommand{\bb}{{\bf b}} \newcommand{\zz}{{\bf z}}
\newcommand{\cc}{{\bf c}} \DeclareMathOperator{\socdeg}{socdeg}

\title{On the rate of generic Gorenstein $K$-algebras}
\author[M. Boij]{Mats Boij} \address{Department of Mathematics, KTH -
  Royal Institute of Technology, SE-100 44 Stockholm, Sweden}
\email{boij@kth.se} \author[E. De Negri]{Emanuela De Negri}
\address{Dipartimento di Matematica, Universit{\`a} di Genova, Via
  Dodecaneso 35, 16146 Genova, Italy} \email{emanuela.denegri@unige.it}
\author[A. De Stefani]{Alessandro De Stefani}
\address{Dipartimento di Matematica, Universit{\`a} di Genova, Via
  Dodecaneso 35, 16146 Genova, Italy} \email{alessandro.destefani@unige.it}
\author[M.E. Rossi]{Maria Evelina Rossi} \address{Dipartimento di
  Matematica, Universit{\`a} di Genova, Via Dodecaneso 35, 16146
  Genova, Italy} \email{rossim@dima.unige.it}

\begin{document}

\begin{abstract} The rate of a standard graded $K$-algebra $A$ is a
  measure of the growth of the shifts in a minimal free resolution of
  $K$ as an $A$-module. In particular $A$ has rate one if and only if
  it is Koszul. It is known that a generic Artinian Gorenstein algebra
  of embedding dimension $n \geq 3$ and socle degree $s=3$ is Koszul.
  We prove that a generic Artinian Gorenstein algebra with $n\geq 4$
  and $s \ge 3 $ has rate $ \lfloor \frac{s}{2} \rfloor. $ In the
  process we show that such an algebra is generated in degree
  $\lfloor \frac{s}{2} \rfloor +1. $ This gives a partial positive
  answer to a longstanding conjecture stated by the first author on
  the minimal free resolution of a generic Artinian Gorenstein ring of
  odd socle degree.

\end{abstract}

\maketitle

\section{Introduction and preliminary results}

Let $K$ be a field and $A=\bigoplus_{i\in \NN}A_i$ be a finitely
generated $\NN$-graded $K$-algebra. We recall that $A$ is said to be
standard graded if $A_0= K$ and $A$ is generated (as a $K$-algebra) by
elements of degree $1, $ that is, $A=K[A_1]$. We denote by ${ \m} $
the homogeneous maximal ideal $\bigoplus_{i\ge 1}A_i$ of $A$. We let
$\edim(A)=\dim_K(A_1)$ be the embedding dimension of $A$,
$\soc(A) = 0:_A\m$ be its socle, which is a homogeneous ideal of $A$,
and $\socdeg(A) = \sup\{i \in \NN \mid \soc(A)_i \ne 0\}$. Observe
that, if $A$ is Artinian, then
$\socdeg(A) = \max\{i \in \NN \mid A_i\ne 0\}$. Consider a minimal
graded free resolution
$$
\FF: \dots \to F_i \to F_{i-1}\to \dots \to F_0\to K\to 0
$$
of $K=A/{\m}$ as an $A$-module.  It is well-known that $\FF$ plays an
important role in the study of the homological properties of $A$.  For
instance, by the Auslander-Buchsbaum-Serre's Theorem, $F_i = 0$ for
$i\gg 0$ if and only if $A$ is regular, i.e., $A$ is isomorphic to a
polynomial ring over $K$.
 
There are several invariants attached to $\FF$. An important one is
the Backelin rate, which measures the growth of the shifts in $\FF$;
we now recall its definition. For integers $i,j \in \ZZ$ we let
$\beta_{i,j}^A(K) = \dim_K(\Tor_i^A(K,K)_j)$ be the $(i,j)$-th graded
Betti number of $K$ in $A$. For every $i$ we let
$t_i^A(K) = \sup \{j \mid \beta^A_{i,j}(K) \neq 0 \}$ where, by
convention, $t_i^A(K) = -\infty$ whenever $\Tor_i^A(K,K)=0$. We
observe that $\Tor_i^A(K,K)$ is a finitely generated graded $K$-vector
space, therefore the integer $t_i^A(K)$ is indeed well-defined. It
readily follows from the given definitions and the
Auslander-Buchsbaum-Serre's Theorem that, if $A$ is not regular, then
$t_i^A(K) \geq i$ for all $i \geq 0$. In the following we assume $A$
not regular, hence $\FF$ is an infinite resolution. The Backelin rate
of $A$ (see \cite{Backelin}) is defined as
\[
  \rate(A)= \sup_{i \geq 2} \left\{\frac{t_i^A(K) - 1}{i-1}\right\}.
\]
 
It turns out that the rate of any standard graded $K$-algebra is
finite, see \cite{Anick,ABH}. Moreover, it follows from our previous
considerations that $\rate(A) \geq 1$ always holds, and one has that
$\rate(A)=1$ if and only if $A$ is Koszul. In this sense, the rate of
$A$ can be regarded as a measure of how far is $A$ from being Koszul.
Consider a minimal presentation of $A$ as a quotient of a polynomial
ring, i.e.  $ A\cong R/I $ where $R=K[x_1,\dots,x_n]$ is a polynomial
ring and $I$ is an ideal generated by homogeneous elements of degree
$>1$ (or, equivalently, $n=\edim(A)$). Let $m(I)$ be the maximum
degree of a minimal homogeneous generator of $I$. It follows from (the
graded version of) \cite[Thm. 2.3.2]{BH} that $t_2(K)=m(I)$, and
therefore one has
\begin{equation}
  \label{minimalRate}
  \rate(R/I) \ge m(I)-1.
\end{equation}
In \cite{Backelin} Backelin proved that $\rate(R/I)=m(I)-1$ for any
monomial ideal $I$ (see also \cite{ERT}). Another interesting class of
standard graded $K$-algebras with minimal rate is given by generic
toric rings, as proved by Gasharov, Peeva and Welker in
\cite{GPW}. Extending a result of Kempf \cite{Kempf}, Conca, De Negri
and Rossi in \cite{CDR} proved that, under suitable conditions, the
coordinate ring of generic points in $\PP^n$ has minimal rate. It is
an interesting problem to provide further classes of algebras of
minimal rate.  \vskip 2mm Artinian Gorenstein $K$-algebras of fixed
embedding dimension $n$ and socle degree $s$ provide a good setting
for studying this question, as they can be parametrized by points in a
suitable projective space via the correspondence given by Macaulay's
inverse system. In fact they are in bijective correspondence with
polynomials in $n$ variables which are homogeneous of degree
$s$. Artinian Gorenstein $K$-algebras which have maximal length in
this family are called compressed; it is known that, when $A$ is a
generic element of such a family, $A$ is compressed \cite{Iarrobino}.
Conca, Rossi and Valla proved that a generic Artinian Gorenstein
algebra $A$ of socle degree three and $n \ge 3 $ is Koszul, see
\cite{CRV}.  The result stimulated several interesting questions and
recent results on $A=R/I$ being Koszul, provided $I$ is generated by
quadrics (see for instance \cite{MSS1,MSS2}).  Observe that if the
socle degree of $A$ is two, then quadratic always implies Koszul.  The
main goal of this paper is to prove the analogue of Conca, Rossi and
Valla's result for higher socle degrees in terms of the rate. We prove
the following result.

\begin{theoremx} \label{thmA} Let $A$ be a generic Artinian Gorenstein
  graded $K$-algebra of socle degree $s \geq 3$, and assume that
  $ \edim(A) \geq 4$. Then $\rate(A) = \lfloor \frac{s}{2} \rfloor$.
\end{theoremx}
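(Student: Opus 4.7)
I would set up the problem via Macaulay's inverse systems: write $A = R/\Ann(F)$, where $R = K[x_1,\ldots,x_n]$ acts on a divided power algebra $S$ in $n$ variables by contraction and $F \in S_s$ is a generic form of degree $s$. Put $d_0 := \lfloor s/2 \rfloor + 1$ and $I := \Ann(F)$. Since $A$ is compressed, $I_i = 0$ for $i < d_0$, and $I_{d_0}$ is the kernel of the contraction map $R_{d_0} \to S_{s-d_0}$, $g \mapsto g \circ F$.

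The first key step is to show that, for a generic $F$, the ideal $I$ is generated in degree $d_0$; equivalently, the multiplication map
\[
\mu_d \colon R_1 \otimes_K I_d \longrightarrow I_{d+1}
\]
is surjective for every $d \geq d_0$. Dualising, this becomes injectivity of a linear map built from the action of $R_1$ on derivatives of $F$, which is an open condition on $F$. By upper-semicontinuity of ranks it suffices to exhibit a single compressed form $F^{*}$ for which each $\mu_d$ is surjective; a natural plan is to take $F^{*}$ whose annihilator can be analysed explicitly, either via a Gr\"obner basis computation or combinatorially from the monomial support of $F^{*}$. Once this generation statement is established, $t_2^A(K) = m(I) = d_0$, and hence $\rate(A) \geq \lfloor s/2 \rfloor$.

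For the upper bound $\rate(A) \leq \lfloor s/2 \rfloor$ I need to show $t_i^A(K) \leq 1 + \lfloor s/2 \rfloor(i-1)$ for every $i \geq 2$. The plan is to upgrade the specialization argument: choose $F^{*}$ so that $A^{*} := R/\Ann(F^{*})$ admits a Gr\"obner degeneration to a monomial algebra $R/J$ with $J$ still generated in degree $d_0$. Backelin's theorem then gives $\rate(R/J) = d_0 - 1 = \lfloor s/2 \rfloor$, and upper-semicontinuity of graded Betti numbers along the Gr\"obner flat family yields $\rate(A^{*}) \leq \lfloor s/2 \rfloor$; a second semicontinuity within the parameter space of compressed forms of degree $s$ then transfers this bound from $A^{*}$ to the generic $A$.

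The principal obstacle is the one-degree generation step. The rank computations required to confirm surjectivity of $\mu_d$ for every $d \geq d_0$ grow in complexity with $s$, and the odd socle degree case amounts to a strengthening of Boij's conjecture on the resolution of generic Gorenstein rings of odd socle degree, which is precisely why the result is phrased as a partial answer. The hypothesis $n \geq 4$ enters here: for $n = 3$ the Pfaffian structure of codimension three Gorenstein ideals can introduce minimal generators of $I$ above degree $d_0$, so the generation statement (and with it the rate bound) fails. Once a good $F^{*}$ is identified, the remaining reductions---Backelin's theorem, together with two invocations of semicontinuity---are essentially formal.
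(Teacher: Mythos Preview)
Your overall architecture is right---establish generation of $I$ in degree $d_0=\lfloor s/2\rfloor+1$ for the lower bound, then bound $t_i^A(K)$ from above---but the upper-bound strategy diverges from the paper's and contains a genuine gap.

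\textbf{The gap.} You propose to find a single $F^{*}$ for which $\Ann(F^{*})$ admits a Gr\"obner degeneration to a monomial ideal $J$ with $m(J)=d_0$, then invoke Backelin and two layers of semicontinuity. The problematic step is the existence of such an $F^{*}$: having $\Ann(F^{*})$ generated in degree $d_0$ in no way forces $\ini_\tau(\Ann(F^{*}))$ to be generated in degree $d_0$. Initial ideals of Gorenstein ideals are typically badly behaved, and producing one with $m(\ini_\tau(I))=d_0$ would be a substantial result in its own right---stronger than what the paper proves. You flag the generation step as the principal obstacle, but in fact your plan needs strictly more: a $d_0$-generated \emph{initial} ideal, not merely a $d_0$-generated ideal. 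There is also a secondary issue with the semicontinuity from $A^{*}$ to the generic $A$: you need a \emph{single} open set on which the infinitely many inequalities $t_i^A(K)\le t_i^{A^{*}}(K)$ hold simultaneously, and this requires an argument beyond pointwise upper-semicontinuity of each $\beta_{ij}$.

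\textbf{What the paper does instead.} The paper separates the two halves completely. For the upper bound it never touches Gr\"obner degenerations: it computes the bigraded Poincar\'e series $P_K^A(u,v)$ explicitly for \emph{every} compressed Gorenstein algebra with $s\ge 4$ (Proposition~\ref{Proposition PS}), using that $A\to A/\soc(A)$ is a Golod map and that $A/\soc(A)$ is itself Golod by the Rossi--\c{S}ega argument. From this closed formula one reads off $t_i^A(K)\le (m(I)-1)(i-1)+1$ for all $i$ (Proposition~\ref{prop compressed}), hence $\rate(A)=m(I)-1$ for any compressed Gorenstein algebra, generic or not. No semicontinuity is needed on this side. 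For the lower bound the paper does construct one compressed $A^{*}$ with $m(I)=d_0$, but not by analysing $\Ann(F^{*})$ directly: it builds a monomial \emph{level} algebra $S/J$ of socle degree $t$ in $n-1$ variables (Lemma~\ref{lemma:1}), lifts to a reduced set of points in $\mathbb P^{n-1}$ via Hartshorne's construction, and then embeds the canonical module as an ideal in degree $t$ (Theorem~\ref{generation}). This yields the generic generation statement (Corollary~\ref{generic}), and combining the two halves gives the theorem.
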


A first important step for proving Theorem \ref{thmA} is to provide an
upper bound on $t_i^A(K)$ for all $i \geq 0$ in terms of $m(I), $ see
Proposition \ref{prop compressed}.  Several interesting bounds on
$t_i^A(K)$ have been proved in the literature, see for instance
\cite{ACI1,ACI2,DHS}. However, the one which is needed for our
purposes is obtained from an explicit computation of the bigraded
Poincar\'e series of $K$, see Proposition \ref{Proposition PS}. The
proof is based on the graded version of an argument due to Rossi and
\c{S}ega, used in \cite{RS} in the local case.  As a consequence of
the above results, we prove that if $A=R/I$ is Gorenstein compressed
with socle degree $s \geq 4$ and if $I$ is generated by homogeneous
polynomials of the same degree, then the rate is minimal, that is
$\lfloor \frac{s}{2} \rfloor.$ Then the key step in the proof of our
main result is to show that a $K$-algebra satisfying the assumptions
of Theorem \ref{thmA} has generators in the same degree.

To this end, recall that a generic Artinian Gorenstein $K$-algebra $A$
of socle degree $s=3$ and embedding dimension $n \geq 3$ is generated
by quadrics. An Artinian compressed Gorenstein algebra of even socle
degree $s=2(t-1)\ge 2$ has almost linear resolution; as a consequence,
it is minimally generated only in degree $t$, see \cite[Example
4.7]{Iarrobino}. On the other hand, an Artinian compressed Gorenstein
$K$-algebra $A=R/I$ of odd socle degree $s=2t-1 \geq 3$ always has
generators in degree $t$, and it has no generators in degrees
different than $t$ or $t+1$ \cite[Proposition 3.2]{B99}. If $A=R/I$ is
a generic Artinian Gorenstein $K$-algebras of odd socle degree
$s=2t-1 \geq 3$ and $n \geq 4$, the graded Betti numbers in a minimal
free resolution of $A$ as $R$-module were conjectured by Boij
\cite[Conjecture 3.13]{B99} in terms of $n$ and $t. $ For every
homological position there exists a Zariski open set (depending on the
coefficients of the form $F$ of degree $s$) where the Artinian
compressed Gorenstein algebras have the same graded Betti numbers.
The conjecture in particular predicts that $I$ should be minimally
generated in degree $t$.

In this article we give a positive answer to this part of the
conjecture. In order to do this, we need to show that the Zariski open
set parametrizing the Artinian compressed Gorenstein algebras
generated in the same degree $t$ is not empty.

\begin{theoremx} \label{thmB} For all integers $t \ge 2$ and $n \ge 4$
  there exists an Artinian compressed Gorenstein $K$-algebra $A= R/I$
  with socle degree $s=2t-1$ and embedding dimension $n$ such that $I$
  is generated in degree $t$.
\end{theoremx}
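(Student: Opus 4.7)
The plan is to reduce the theorem to producing a single explicit example via a semicontinuity argument, and then to construct such an example. Let $S = K[y_1, \dots, y_n]$ be the dual polynomial ring on which $R$ acts by contraction. By Macaulay's inverse system, the Artinian Gorenstein quotients of $R$ with socle degree $s = 2t-1$ are parametrized by $\PP(S_s)$ via $F \mapsto A_F = R/\Ann(F)$, and the compressed locus $U \subseteq \PP(S_s)$ is a non-empty Zariski open subset by \cite{Iarrobino}. On $U$ the Hilbert function of $A_F$ is constant, so $\dim_K \Ann(F)_t$ and $\dim_K \Ann(F)_{t+1}$ are constant. Since \cite[Proposition 3.2]{B99} asserts that any minimal generator of $\Ann(F)$ has degree $t$ or $t+1$, the condition that $\Ann(F)$ be generated in degree $t$ is equivalent to the multiplication map $\mu_F \colon R_1 \otimes_K \Ann(F)_t \to \Ann(F)_{t+1}$ being surjective. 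Since $\mu_F$ depends algebraically on $F \in U$, its rank is lower semi-continuous, so the locus in $U$ where $\mu_F$ is surjective is Zariski open. Thus it suffices to exhibit a single $F \in U$ for which $\mu_F$ is surjective.

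For the explicit construction, the natural base case is $t = 2$: for $s = 3$, the statement follows from Conca-Rossi-Valla \cite{CRV}, since a generic cubic yields a Koszul, hence quadratically generated, algebra. For general $t$, I would look for a specific form $F$ that can be analyzed through the Macaulay correspondence: either proceed inductively, building an example in socle degree $2t-1$ from one in socle degree $2t-3$ via a structured operation (for instance multiplication by a quadratic form combined with a correction term), or analyze a direct candidate such as a carefully chosen perturbation of a determinantal or power-sum expression. Note that naive candidates do not work: the Fermat form $\sum x_i^s$ is not compressed, and a sum $F = \sum_{i=1}^r L_i^s$ with $r = \binom{n+t-2}{t-1}$ generic linear forms is compressed but fails to be generated in degree $t$, since $\Ann(F)_{t+1}$ acquires extra generators from linear dependences among $\{L_i^{t-2}\}$.

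The main obstacle is producing an example whose apolar ideal can be analyzed: verifying compressedness amounts to a classical rank condition on catalecticant maps, but verifying surjectivity of $\mu_F$ is the delicate step, requiring a dimension count that balances the Koszul relations in $R_1 \otimes \Ann(F)_t$ against $\Ann(F)_{t+1}$. Once a single example is produced, the semicontinuity argument above completes the proof and simultaneously yields non-emptiness of the Zariski open set parametrizing compressed Gorenstein algebras generated in degree $t$, as required for the inductive hypotheses used in Theorem \ref{thmA}.
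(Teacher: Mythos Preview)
Your semicontinuity reduction is correct and matches how the paper passes from ``there exists one example'' to ``a generic algebra has this property'' (this is exactly Corollary \ref{generic}). The base case $t=2$ via \cite{CRV} is also fine. But the proposal is not a proof: for $t \ge 3$ you do not construct any $F$ with $\Ann(F)$ generated in degree $t$. You only list candidate strategies (induction on $t$, perturbations of power sums or determinantal forms) and then explicitly say ``the main obstacle is producing an example whose apolar ideal can be analyzed.'' That obstacle \emph{is} the theorem. The semicontinuity step is soft; the entire content of Theorem \ref{thmB} is to exhibit one point in the open set, and you have not done so.

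The paper's construction does not go through the Macaulay inverse system at all, which is perhaps why you did not find it. Instead it works on the $R$-side in two steps. First, one builds by hand (Lemma \ref{lemma:1}) an explicit monomial ideal $J \subseteq S = K[x_1,\dots,x_{n-1}]$ with $m(J)=t$ such that $S/J$ is Artinian \emph{level} of socle degree $t$; the ideals are $J=(X^2+Y^2)^u$ for $t=2u$ and $J=X(X^2+Y^2)^u+Y(Y^2+Z^2)^u+Z(Z^2+X^2)^u+XYZ((X+Y)^2+Z^2)^{u-1}$ for $t=2u+1$, where $X,Y,Z$ is a partition of the variables. Second (Theorem \ref{generation}), one lifts $S/J$ via Hartshorne's construction to a reduced set of points $X \subseteq \PP^{n-1}$ with $I_X$ generated in degree $t$, and then embeds the canonical module $\omega_X$ as an ideal $\a \subseteq R_X$ in degree $t$ (possible by \cite{Kr00} since $\dim_K (R_X)_t = \deg X$). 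The quotient $R_X/\a$ is then Artinian Gorenstein of socle degree $2t-1$, its defining ideal in $R$ is generated in degree $t$ (since both $I_X$ and $\a$ are), and compressedness follows because there are no generators below degree $t$. This level-algebra-plus-canonical-ideal trick completely bypasses the difficulty of analyzing $\Ann(F)$ for a specific form.
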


The first step in our proof is to produce an Artinian level algebra
with socle degree $s$ defined by monomials of degree $s$, see Lemma
\ref{lemma:1}.  The required Artinian compressed Gorenstein
$K$-algebra is then obtained from it using a tricky construction, see
Theorem \ref{generation}.

We point out that Boij's conjecture fails in general for generic
Artinian Gorenstein algebras but in the middle of the resolution, not
at the level of generators, consistently with Theorem \ref{thmB}. In
fact, counterexamples were exhibited by Kunte \cite{Ku}, see Corollary
5.6 and Remark 5.7.

\section{The graded Poincar\'e series of a compressed Gorenstein $K$-algebra}
Let $A$ be a standard graded $K$-algebra and $M$ be a finitely
generated $\ZZ$-graded $A$-module.

The graded Poincar\'e series of $M$ is defined as
\[
  P^A_M(u,v) = \sum_{i \geq 0}\left(\sum_{j \in
      \ZZ}\beta_{i,j}^A(M)v^j\right)u^i \in \ZZ[v,v^{-1}]\ps{u},
\]
where $\beta_{i,j}^A(M) = \dim_K(\Tor_i^A(M,K)_j)$. Note that, fixed
an integer $i \geq 0$, one has that $\Tor_i^A(M,K)_j=0$ for all but
finitely many $j \in \ZZ$. Therefore each sum
$\sum_{j \in \ZZ} \beta_{i,j}^A(M)v^j$ is finite.  \vskip 2mm

Given a minimal presentation $A=R/I$ with $R=K[x_1,\ldots,x_n]$, Serre
\cite{Serre} showed that there is a coefficient-wise inequality
\begin{equation} \label{eqGolod} P^A_K(u,v) \leq
  \frac{P^R_K(u,v)}{1-u(P^R_A(u,v)-1)}.
\end{equation}
\begin{definition} A standard graded $K$-algebra $A$ is said to be
  Golod in the graded sense if equality holds in (\ref{eqGolod}).
\end{definition}
\begin{remark} \label{remark Golod} Since for total Betti numbers one
  has $\beta_i^A(K) = \beta_i^{A_\m}(A_\m/\m A_\m)$ for all $i$, and
  because one always has a point-wise inequality between graded Betti
  numbers in (\ref{eqGolod}), it turns out that $A$ is Golod in the
  graded sense if and only if $A_\m$ is a Golod local ring.
\end{remark}

With the aim of bounding $t_i^A(K)$ for every $i>0$, the goal of this
section is to compute the graded Poincar\'e series of $K = A/\m$ for
any Artinian compressed Gorenstein algebra $A$.

Assume that $A$ is an Artinian Gorenstein standard graded
$K$-algebra. Then we can write $A=\bigoplus_{i=0}^s A_i$ with
$A_s \cong K$. The integer $s$ is called the socle degree of $A$. The
Hilbert function of $A$ is the function
$\HF_A:\NN \to \NN$, defined as $\HF_A(i) = \dim_K(A_i)$. The Hilbert function of a Gorenstein algebra is symmetric, that is, $\HF_A(i) = \HF_A(s-i)$ for all $i$. In particular, since $\HF_A(i) \leq \binom{n+i-1}{i}$, one
has
\[
  \HF_A(i) \le \min\left\{ \binom{n+i-1}{i}, \binom{n+s-i-1}{s-i}
  \right\}.
\]

\begin{definition} An Artinian Gorenstein $K$-algebra $A$ of socle
  degree $s$ is said to be {\it{ compressed}} if for all
  $i \in \{0,\ldots,s\}$ one has
  \[
    \HF_A(i) = \min\left\{ \binom{n+i-1}{i}, \binom{n+s-i-1}{s-i}
    \right\}.
  \]
\end{definition}
\vskip 2mm
 
In what follows, given a $\ZZ$-graded module $M = \bigoplus_{j \in \ZZ}M_j$ and $a \in \ZZ$, for all $j \in \ZZ$ we will set $M(a)_j = M_{a+j}$, and we will let $M(a) = \bigoplus_{j \in \ZZ} M(a)_j$ be the shift of $M$ by $a$. The following result is a graded version of \cite[Proposition
6.2]{RS}.

\begin{proposition} \label{Proposition PS} Let $A$ be an Artinian
  compressed Gorenstein $K$-algebra of embedding dimension $n \ge 2 $
  and socle degree $s$, with $2 \leq s \ne 3$. Then
  \[
    P^A_K(u,v) = \frac{(1+uv)^n}{1-u(P^R_A(u,v) - 1) +
      u^{n+1}v^{n+s}(1+u)}.
  \]
\end{proposition}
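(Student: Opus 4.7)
The plan is to follow the strategy used by Rossi and \c{S}ega in the local case \cite[Proposition~6.2]{RS}, upgrading every step to keep track of the internal grading $v$. The target formula is equivalent, after clearing denominators and using $P^R_K(u,v) = (1+uv)^n$, to the identity
\[
P^A_K(u,v)\bigl(1 - u(P^R_A(u,v)-1) + u^{n+1}v^{n+s}(1+u)\bigr) = (1+uv)^n,
\]
which measures the precise failure of $A$ to be graded Golod, in the sense of \eqref{eqGolod}.

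The framework is the Avramov--Tate model: one builds a minimal bigraded DG $R$-algebra resolution $R\langle X\rangle \xrightarrow{\simeq} K$, where the set $X$ of divided-power variables encodes the deviations of $A$. Passing to $A\otimes_R R\langle X\rangle$ gives a DG $A$-algebra resolution of $K$, and its Poincar\'e series admits a closed-form expression in terms of the bigraded deviations $\epsilon_{i,j}$ of $A$. The task then reduces to identifying precisely which deviations are non-zero and in which bidegrees.

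For this, I would exploit the compressed hypothesis together with the Gorenstein symmetry of $\Tor^R_*(A,K)$. Compression forces the bigraded Betti numbers $\beta^R_{ij}(A)$ to concentrate in as few bidegrees as possible; combined with the duality $\beta^R_{ij}(A) = \beta^R_{n-i,\,n+s-j}(A)$, this shows that products of positive-degree Koszul homology classes vanish unless they reach the socle bidegree $(n,n+s)$. Consequently the bigraded homotopy Lie algebra $\pi^*(A)$ coincides with that of the associated Golod ring except for one extra contribution in bidegree $(n+1, n+s)$, arising from the socle generator. The hypothesis $s \ne 3$ enters at precisely this point: in socle degree three a generic compressed Gorenstein algebra is Koszul by Conca--Rossi--Valla, which forces additional low-bidegree deviations and disrupts the formula. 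Plugging the identified deviation data into the Avramov product formula yields the correction factor $u^{n+1}v^{n+s}(1+u)$ in the denominator, the $(1+u)$ reflecting the parity interaction of the new divided-power variable with the Koszul generators.

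The main obstacle is proving the vanishing of all \emph{unwanted} Massey products and higher deviations in the bigraded setting. In the local case this is handled in \cite{RS} using the filtration by powers of $\m$ together with the fact that the socle imposes an essentially unique obstruction; the graded refinement requires a careful accounting of which multiplications in $\Tor^R_*(A,K)$ can survive compression once internal degrees are tracked in parallel with homological ones. This bookkeeping is also where the proof must genuinely exclude the case $s=3$.
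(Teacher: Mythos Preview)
Your proposal outlines a plausible strategy but contains a genuine gap and a misidentification of the role of the hypothesis $s\ne 3$.

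The gap is exactly the one you name: you never establish the vanishing of the unwanted Massey products, nor do you identify the deviations precisely. Saying that ``products of positive-degree Koszul homology classes vanish unless they reach the socle bidegree'' is the heart of the matter, and it is asserted rather than proved. Your interpretation of the correction term $u^{n+1}v^{n+s}(1+u)$ as ``one extra contribution in bidegree $(n+1,n+s)$'' with the factor $(1+u)$ coming from a ``parity interaction'' is also not quite right at the level of deviations: this is a rational-function correction, and it does not correspond to adjoining a single divided-power variable.

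Your explanation of where $s\ne 3$ enters is off. The proposition concerns \emph{all} compressed Gorenstein algebras of the given socle degree, not only generic ones, so the Koszulness result of Conca--Rossi--Valla is not the obstruction. The hypothesis $s\ne 3$ is needed so that the quotient $T=A/\soc(A)$ is a Golod ring; this is where \cite[Theorem~5.1]{RS} is invoked, and that theorem does not cover socle degree three.

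The paper's proof avoids your obstacle entirely by factoring through $T=A/\soc(A)$. Three ingredients replace the deviation analysis: (i) the surjection $A\to T$ is a Golod homomorphism (Levin--Avramov), yielding $P^A_K$ in terms of $P^T_K$; (ii) $T$ itself is Golod in the graded sense (via \cite{RS} and Remark~\ref{remark Golod}), giving $P^T_K$ in terms of $P^R_T$; (iii) a direct computation of $P^R_T$ from $P^R_A$ using the long exact $\Tor^R$-sequence attached to $0\to K(-s)\to A\to T\to 0$, where the only subtlety is that the connecting map is nonzero at homological degree $n$. Substituting (iii) into (ii) into (i) gives the formula with no Massey-product bookkeeping at all. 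This is both shorter and more robust than trying to pin down the bigraded homotopy Lie algebra of $A$ directly.
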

\begin{proof}
  By \cite{LevinAvramov}, the natural map $A \to T = A/\soc(A)$ is
  Golod, therefore
  \[
    P^T_K(u,v) = \frac{P^A_K(u,v)}{1-u(P^A_T(u,v)-1)}.
  \]
  From the graded short exact sequence
  $0 \to \soc(A) \cong K(-s) \to A \to T \to 0$, we see that
  $P^A_T(u,v) = 1 + uv^sP^A_K(u,v)$, and substituting above we get
  \[
    P^T_K(u,v) = \frac{P^A_K(u,v)}{1-u^2v^sP^A_K(u,v)},
  \]
  from which we conclude
  \[
    P^A_K(u,v) = \frac{P^T_K(u,v)}{1+u^2v^sP^T_K(u,v)}.
  \]
  Since $A_\m$ satisfies the assumptions of \cite[Theorem 5.1]{RS}, we
  have that $T_\m$ is a Golod ring (see the paragraph before
  \cite[Proposition 6.3]{RS}). Thus, we have that $T$ is Golod in the
  graded sense by Remark \ref{remark Golod}, and therefore
  \[
    P^T_K(u,v) = \frac{P^R_K(u,v)}{1-u(P^R_T(u,v)-1)} =
    \frac{(1+uv)^n}{1-u(P^R_T(u,v)-1)}.
  \]
  Consider again the graded short exact sequence
  $0 \to \soc(A) \cong K(-s) \to A \to T \to 0$. As the map
  $\Tor_i^R(\soc(A),K) \to \Tor_i^R(A,K)$ is zero for $i \ne n$ (see
  the proof of \cite[Lemma 6.1]{RS}), we obtain the following exact
  sequences
  \[
    \xymatrix{ 0 \ar[r] & \Tor_i^R(A,K) \ar[r] & \Tor_i^R(T,K) \ar[r]
      & \Tor_{i-1}^R(K,K)(-s) \ar[r] & 0 }
  \]
  for all $0 < i < n$, and
  \[
    \xymatrix{ 0 \ar[r] & \Tor_n^R(K,K)(-s) \ar[r] & \Tor_n^R(A,K)
      \ar[r] & \Tor_n^R(T,K) \ar[r] & \Tor_{n-1}^R(K,K)(-s) \ar[r] &
      0.  }
  \]
  Therefore for $0<i<n$ we have
  \[
    \beta_{i,j}^R(T) = \beta_{i,j}^R(A) + \beta_{i-1,j-s}^R(K),
  \]
  while for $i=n$ we have
  \[
    \beta_{n,j}^R(T) = \beta_{n,j}^R(A) + \beta_{n-1,j-s}^R(K) -
    \beta_{n,j-s}^R(K).
  \]
  Putting all together we have
  \begin{align*}
    P^R_T(u,v) &= \sum_{i,j} \beta_{i,j}^R(T)u^iv^j \\
               & = \sum_{\substack{0 \leq i \leq n \\ j \in \ZZ}}  \beta_{i,j}^R(A) u^iv^j + uv^s \sum_{\substack{1 \leq i \leq n \\ j \in \ZZ}} \beta_{i-1,j-s}^R(K)u^{i-1}v^{j-s} - \sum_{j \in \ZZ}  \beta_{n,j-s}^R(K) u^nv^j\\
               & = P^R_A(u,v) + uv^sP^R_K(u,v) - u^n \sum_{j \in \ZZ} \left(\beta_{n,j-s}^R(K)u v^j + \beta_{n,j-s}^R(K)v^j\right) \\
               & = P^R_A(u,v) + uv^sP^R_K(u,v)  - u^n(u+1)v^{n+s} = P^R_A(u,v) + uv^s(1+uv)^n  - u^n(u+1)v^{n+s}
  \end{align*}
  where we used that
  $\beta_{n,j-s}^R(K) = \begin{cases} 1 & \text{ if } j-s = n \\ 0 &
    \text{ otherwise.} \end{cases}$

  Substituting the identities obtained so far in the formula for
  $P^A_K(u,v)$, we finally get that
  \begin{align*}
    P^A_K(u,v) & = \frac{P^T_K(u,v)}{1+u^2v^sP^T_K(u,v)} \\
               & = \frac{(1+uv)^n}{1-u(P^R_T(u,v)-1)+u^2v^s(1+uv)^n} \\
               & = \frac{(1+uv)^n}{1-u(P^R_A(u,v)-1) + u^{n+1}v^{n+s}(1+u)}. \qedhere
  \end{align*}
\end{proof}

\begin{proposition} \label{prop compressed} Let $A$ be an Artinian
  compressed Gorenstein graded $K$-algebra of embedding dimension
  $n \geq 2$ and socle degree $s \geq 4$, and let $A=R/I$ be a minimal
  presentation. Then $t_i^A(K) \leq (m(I)-1)(i-1)+1$ for every
  $i\geq 1$.
\end{proposition}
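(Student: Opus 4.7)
The plan is to use the Poincaré series formula of Proposition~\ref{Proposition PS} to set up a recurrence for the graded Betti numbers of $K$ over $A$, and then argue by strong induction on $i$. Writing $E(u,v) := u(P^R_A(u,v)-1) - u^{n+1}v^{n+s}(1+u)$ so that $D(u,v) = 1 - E(u,v)$, and setting $f_i(v) := [u^i]P^A_K(u,v)$ and $e_k(v) := [u^k]E(u,v)$, the identity $P^A_K(u,v) \cdot D(u,v) = (1+uv)^n$ is equivalent to
\[
f_i(v) = \binom{n}{i}v^i + \sum_{k=2}^{i} e_k(v)\, f_{i-k}(v).
\]
The desired conclusion $t_i^A(K) \le (m-1)(i-1)+1$ rewrites as $\deg f_i(v) \le (m-1)(i-1)+1$ for $i \ge 2$, which (given the trivial base cases $\deg f_0 = 0$ and $\deg f_1 = 1$) reduces by strong induction on $i$ to the single claim
\[
(\star) \qquad \deg e_k(v) \le (m-1)(k-1)+1 \quad \text{for every } k \ge 2.
\]

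To verify $(\star)$, I would compute $e_k(v)$ explicitly from the definition of $E$. For $2 \le k \le n$ the correction plays no role and $e_k(v) = \sum_j \beta_{k-1,j}^R(A)\, v^j$, so $\deg e_k = t_{k-1}^R(A)$. Gorenstein duality gives $\beta_{n,j}^R(A) = \delta_{j,n+s}$; this unique top Betti is cancelled exactly by the $-u^{n+1}v^{n+s}$ contribution, so $e_{n+1}(v) = 0$. Since $\pd_R A = n$, one finds $e_{n+2}(v) = -v^{n+s}$, and $e_k(v) = 0$ for $k \ge n+3$. Thus $(\star)$ reduces to the two inequalities
\begin{itemize}
\item[(a)] $t_a^R(A) \le (m-1)a + 1$ for $1 \le a \le n-1$;
\item[(b)] $n + s \le (m-1)(n+1)+1$.
\end{itemize}
Condition (b) is an easy arithmetic check: compressedness of $A$ with $s \ge 4$ forces $s \le 2m-1$ and $m \ge 3$, so from $n \ge 2$ one obtains $(m-2)n + m \ge 3m - 4 \ge 2m - 1 \ge s$. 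For (a), the case $a=1$ is the equality $t_1^R(A) = m$. For $2 \le a \le n-1$, I would combine Gorenstein duality $\beta_{a,j}^R(A) = \beta_{n-a,n+s-j}^R(A)$ with the uniform lower bound $\min\{j : \beta_{i,j}^R(A) \ne 0\} \ge d_{\min} + i - 1$ for $i \ge 1$, valid in any minimal graded free resolution (with $d_{\min}$ the minimum degree of a minimal generator of $I$). This yields $t_a^R(A) \le a + s - d_{\min} + 1$. A short case analysis (comparing even and odd socle, and within the odd case distinguishing $m = d_{\min}$ from $m = d_{\min}+1$) shows that for compressed Gorenstein with $s \ge 4$ one has $d_{\min} = \lceil (s+1)/2 \rceil$ and $s - d_{\min} + 1 \le m$, so $t_a^R(A) \le a + m$. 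The final arithmetic estimate $a+m \le (m-1)a+1$, equivalent to $(m-2)(a-1) \ge 1$, holds for $a \ge 2$ and $m \ge 3$.

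The main obstacle is condition (a), where the substantive ingredients are Gorenstein duality and the minimality of the resolution (the latter giving the lower bound on syzygy degrees). Once $(\star)$ is in hand, the inductive step is routine: splitting the recurrence according to $i-k \in \{0, 1, \ge 2\}$ one bounds each term $e_k(v) f_{i-k}(v)$ by $(m-1)(i-1)+1$ using $\deg f_0 = 0$, $\deg f_1 = 1$, and the induction hypothesis respectively, while the leading monomial gives $\deg\bigl(\binom{n}{i}v^i\bigr) = i \le (m-1)(i-1)+1$ since $m \ge 2$.
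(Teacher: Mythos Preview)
Your proof is correct and follows the same overall scheme as the paper: clear the denominator in Proposition~\ref{Proposition PS} to obtain a recurrence for the $u$-coefficients of $P^A_K(u,v)$, then bound their $v$-degrees by strong induction. Where the two arguments diverge is in how the degrees of the $e_k$ (equivalently, the $t_a^R(A)$) are controlled. The paper first invokes the explicit Fr\"oberg--Laksov shape of the minimal $R$-resolution of a compressed Gorenstein algebra, writes $P^R_A(u,v)$ out in terms of the pairs $(a_j,b_j)$, and then runs the induction twice, splitting into the cases $b_1=0$ (so $m(I)=t$) and $b_1\neq 0$ (so $m(I)=t+1$). You instead derive the needed bound $t_a^R(A)\le a+m$ directly from Gorenstein duality $\beta_{a,j}^R(A)=\beta_{n-a,\,n+s-j}^R(A)$ together with the elementary minimality inequality $\beta_{i,j}^R(A)=0$ for $j<d_{\min}+i-1$, and you work uniformly with $m=m(I)$ throughout. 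This buys you a cleaner, case-free argument that does not rely on the cited structure theorem; the paper's route, on the other hand, makes the recursion completely explicit, which is useful if one wants sharper information (e.g., exactly when $\rate(A)=t-1$ versus $t$).
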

\begin{proof}
  Let $t= \lfloor \frac{s}{2} \rfloor+1$. Since $A$ is a compressed
  Gorenstein $K$-algebra, by \cite{FrobergLaksov} (see also
  \cite{B99}) a graded minimal free resolution of $A$ over $R$ has the
  following shape:
  \[
    \xymatrixrowsep{1mm} \xymatrix{
      &&R(-(n+t-2))^{a_{n-1}}  && R(-t)^{a_1} \\
      0 \ar[r] & R(-(n+s)) \ar[r] & \qquad \bigoplus \qquad \ar[r] & \ldots \ar[r] & \qquad \bigoplus \qquad \ar[r]& R \ar[r] & A \ar[r]&0.  \\
      &&R(-(n+t-1))^{b_{n-1}} &&R(-(t+1))^{b_1} }
  \]
  By Proposition \ref{Proposition PS} we therefore obtain
  \begin{align*}
    P^A_K(u,v) & = \displaystyle \frac{(1+uv)^n}{1-u(\sum_{j=1}^{n-1} (a_j+b_jv)u^jv^{t+j-1}) + u^nv^{n+s}) + u^{n+1}v^{n+s}(1+u)} \\
               & =  \frac{(1+uv)^n}{1-\sum_{j=1}^{n-1} (a_j+b_jv) u^{j+1}v^{t+j-1} + u^{n+2}v^{n+s}}. 
  \end{align*}
  Write $P^A_K(u,v) = \sum_{i \geq 0} \beta_i(v)u^i$, where
  $\beta_i(v) \in \ZZ[v]$. For simplicity, for all $i \in \NN$ we let
  $\delta_i=\deg(\beta_i(v))=t_i^A(K)$, where $\deg(-)$ denotes the
  degree of a polynomial in the variable $v$. By clearing denominators
  and equating the coefficients of $u^i$ in both sides of the previous
  identity we deduce that
  \begin{equation}
    \label{casesBetti}
    \beta_i = \begin{cases} 1 & \text{ if } i=0 \\
      nv & \text{ if } i=1 \\
      {n \choose i}v^i + \sum_{j=1}^{i-1} \beta_{i-j-1}(a_j+b_jv)v^{t+j-1} & \text{ if } 2 \leq i \leq n \\
      \sum_{j=1}^{n-1} \beta_{n-j+1}(a_j+b_jv)v^{t+j-1} + v^{n+s} & \text{ if } i=n+2 \\
      \sum_{j=1}^{n-1} \beta_{i-j-1}(a_j+b_jv)v^{t+j-1} & \text{ otherwise}
    \end{cases}.
  \end{equation} 

  We distinguish two cases. If $b_1=0$, then by induction on
  $i \geq 1$ we prove that $\delta_i \leq (t-1)(i-1)+1$. Note that for
  $i=1,2$ the inequality is satisfied since $\delta_1=1$ and
  $\delta_2=t$; thus we may assume $i \geq 3$. For
  $2 \leq j=i-1 \leq n-1$ we have that
  \[
    \deg(\beta_0(a_{i-1}+b_{i-1}v)v^{t+i-2}) \leq t+i-1 \leq
    (t-1)(i-1)+1.
  \]
  For $1 \leq j \leq n-1$ with $j\ne i-1$, if we assume that
  $\delta_{i-j-1} \leq (t-1)(i-j-2)+1$, then
  \begin{equation} \label{Deg1} \deg(\beta_{i-j-1}(a_j+b_jv)v^{t+j-1})
    \leq (t-1)(i-j-2)+1+t+j \leq (t-1)(i-1)+1,
  \end{equation}
  where the latter holds since $t \geq 2$ in our assumptions. One can
  check directly that $i \leq (t-1)(i-1)+1$ holds. Finally, since
  $s \leq 2t-1$, $n \geq 2$ and $t \geq 3$, we also have that
  $n+s \leq (t-1)(n+1)-1$. All these inequalities, together with
  (\ref{casesBetti}) and the inductive hypothesis which allows to use
  (\ref{Deg1}) for appropriate values of $j$, give that
  $\delta_i \leq (t-1)(i-1)+1$. Note that the assumption $b_1=0$
  guarantees that $m(I)=t$, and since $\delta_i=t_i^A(K)$ we proved
  that $t_i^A(K) \leq (m(I)-1)(i-1)+1$, as desired.

  Now assume that $b_1\ne 0$, so that $m(I)=t+1$. In this case, we
  prove by induction on $i \geq 1$ that $\delta_i \leq t(i-1)+1$. The
  strategy is similar to the one used in the previous case. Since
  $\delta_1=1$ the claimed inequality is true for the base case, and
  we may assume $i \geq 2$. For $1 \leq j=i-1 \leq n-1$ we have that
  \[
    \deg(\beta_0(a_{i-1}+b_{i-1}v)v^{t+i-2}) \leq t+i-1 \leq t(i-1)+1.
  \]
  Observe that, for $1 \leq j \leq n-1$ with $j \ne i-1$, if we assume
  that $\delta_{i-j-1} \leq t(i-j-2)+1$, then
  \begin{equation} \label{Deg2} \deg(\beta_{i-j-1}(a_j+b_jv)v^{t+j-1})
    \leq \delta_{i-j-1}+t+j \leq t(i-j-2)+1+t+j \leq t(i-1)+1.
  \end{equation}
  Finally, we have that $i \leq t(i-1)+1$ for all $i \geq 2$ and
  $n+s \leq t(n+1)-1$ since $s \leq 2t-1$. These inequalities,
  together with (\ref{casesBetti}) and the inductive hypothesis which
  allows to use (\ref{Deg2}) for all appropriate values of $j$, give
  that $t_i^A(K) = \delta_i \leq t(i-1)+1 = (m(I)-1)(i-1)+1$.
\end{proof}

In \cite[Question 6.10]{CRV} the authors ask if any quadratic
Gorenstein algebra of socle degree three is Koszul.  However, for any
embedding dimension $n \geq 8$ there are examples of Gorenstein
quadratic algebras of regularity three which are not Koszul, see
\cite{MSS1,MCS}.

In contrast with the case of socle degree three, compressed Gorenstein
algebras with $s \geq 4$ have minimal rate.

\begin{corollary} \label{corollary t-generated} Let $A$ be an Artinian
  compressed Gorenstein graded $K$-algebra of socle degree $s \geq 4$,
  and let $A=R/I$ be a minimal presentation.  Then
  $\rate(A) = m(I)-1$.
\end{corollary}
 \begin{proof}
The statement follows immediately
from Proposition \ref{prop compressed} and (\ref{minimalRate}).
 \end{proof}

\section{The rate of a generic Gorenstein $K$-algebra}

Every Artinian Gorenstein graded algebra $A=R/I$ of socle degree $s$,
corresponds, up to scalars, to a form $F$ of degree $s$ in another set
of variables as follows. Let $R=K[x_1, \dots, x_n] $ and
$B=K[y_1, \dots, y_n]. $ Regard $B$ as a $R$-module via the action
$x_i \circ F= \partial F/\partial y_i; $ this action can be extended
to every element $G \in R $ acting as a differential operator on the
elements of $B.$ In case of positive characteristic, we have to
replace the differentiation action by the \emph{contraction} action
where
\[
  x_1^{\alpha_1}
  x_2^{\alpha_2}\cdots
  x_n^{\alpha_n}\circ   y_1^{\beta_1}
  y_2^{\beta_2}\cdots
  y_n^{\beta_n}  = \begin{cases}
    y_1^{\beta_1-\alpha_1}
  y_2^{\beta_2-\alpha_2}\cdots
  y_n^{\beta_n-\alpha_n},  & \text{ if $\beta_i\ge \alpha_i$ for
    $i=1,2,\dots,n$}\\
  0, &\text{otherwise.}\\
  \end{cases}
\]
Given a form $F$ of degree $s, $ we denote by $I_F$
the ideal of the elements of $R$ which annihilate $F$:
$$ I_F= \operatorname{ann}(F) = \{G\in R \ | \ G \circ F=0\}$$
and we denote $A_F= R/I_F.$ Then $A_F$ is a standard graded Artinian Gorenstein algebra
of socle degree $s.$ Moreover, every ideal $I \subset R$ which defines
a standard graded Artinian Gorenstein algebra of socle degree $s$
arises in this way. This correspondence is known as Macaulay's Inverse
System. For more information we refer to the book of Iarrobino and
Kanev \cite{IK}. In particular, this means that the set of standard graded Artinian Gorenstein algebras of socle degree $s$ and embedding dimension $n$ is parametrized by
the projective space $\PP(B_s)$. For a property that holds for all
algebras in an open dense subset of this parameter space, we say that
the property holds for a \emph{generic}  Artinian Gorenstein
$K$-algebra $A$ of socle degree $s$. For example, Iarrobino proved
the following theorem
\cite[Prop 4.7]{Iarrobino}.

\begin{theorem}\label{compressed}
  A generic Artinian Gorenstein algebra of socle degree $s$ is
  compressed. 
\end{theorem}

\subsection{Generators of generic Artinian Gorenstein algebras}

It is known that Artinian compressed Gorenstein $K$-algebras of even
socle degree $s=2(t-1)>1 $ have almost linear resolution (see
\cite[Example 4.7]{Iarrobino}). That is, if $n=\edim(A)$ and $A=R/I$ is a
minimal presentation, then a minimal free resolution of $A$ over $R$
is almost linear:
\[
  \xymatrixcolsep{4.5mm} \xymatrix{ 0 \ar[r] & R(-(n+s)) \ar[r] &
    R^{a_{n-1}}(-(n+t-2)) \ar[r] & \ldots \ar[r] & R^{a_2}(-(t+1)) \ar[r] &
    R^{a_1}(-t) \ar[r] & R \ar[r] & A \ar[r] & 0.  }
\]
In particular
\[
  m(I)=t=s/2 + 1.
\]
\vskip 2mm

If $s=2t-1$ and $n=3$ the Betti numbers of generic Gorenstein algebras
are known, see for instance \cite[Example 3.16]{B99}, and the
following holds:
\[
  m(I)  = \begin{cases} t & \text{ if } t \text{ is even}\\
    t +1 & \text{ if } t \text{ is odd.}
  \end{cases}
\]

\begin{remark} \label{Boij Conj}
Boij proved that there exists a non-empty Zariski open set of Artinian compressed Gorenstein algebras
all sharing the same numerical resolution (see \cite[Corollary 3.10]{B99}). In the same paper, he conjectured an explicit formula for the graded Betti numbers of generic Artinian Gorenstein algebras with $n>3$ and socle degree $s>1$ \cite[Conjecture 3.13]{B99}, which predicts that the defining ideal is generated in a single degree. Note that, in order to prove the conjecture, one only has to exhibit an explicit example of a compressed Gorenstein algebra with the expected resolution. If one only produces an example where the defining ideal is generated in a single degree, then there is a non-empty Zariski open set of algebras sharing the same property.
\end{remark}

Regarding Boij's conjecture, computer generated evidence was given in
\cite[Section 4]{B99}, but it was later disproved for
special values of $n$ and $s$ by Kunte \cite[Corollary 5.6,\ Remark
5.7]{Ku}. However, in all known counterexamples the failure is in the
middle of the resolution. The aim of this section is to prove that
Boij's conjecture holds true at the beginning of the
resolution. Namely, we prove that generic Artinian Gorenstein algebras
of socle degree $s=2t-1$ are minimally generated in degree $t$.

We recall that an Artinian $K$-algebra $A$ of socle degree $s$ is
called \emph{level} if $\soc(A) \subseteq A_s$. The construction of the
following lemma will be crucial in what follows.

\begin{lemma}\label{lemma:1}  
  For all integers $t \geq 2$ and $m\geq 3$ there exists a monomial
  ideal $J \subseteq S=K[x_1,\ldots,x_m]$ such that $m(J)=t$ and
  $A=S/J$ is an Artinian level algebra with socle degree $t$.
\end{lemma}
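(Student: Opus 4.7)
The plan is simple and direct. I would take
\[
J = (x_1^t,\; x_2^2,\; x_3,\; x_4,\; \ldots,\; x_m) \subset S = K[x_1, \ldots, x_m].
\]
The listed minimal generators have degrees $t, 2, 1, 1, \ldots, 1$, so $m(J) = \max(t, 2, 1) = t$ using the hypothesis $t \geq 2$.

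Next, since $J$ contains the linear forms $x_3, \ldots, x_m$, we get the identification $S/J \cong K[x_1, x_2]/(x_1^t, x_2^2)$. The latter is an Artinian complete intersection, hence Artinian Gorenstein. Its socle is one-dimensional, generated by $x_1^{t-1} x_2$ in degree $t$. Being Gorenstein, $A$ is in particular level with socle concentrated in the single degree $t$. This shows $A = S/J$ is an Artinian level algebra of socle degree $t$ with $m(J) = t$, as required.

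The only real conceptual step is to notice that the lemma as literally stated places no requirement on the embedding dimension of $A$, and does not demand that $J$ be generated purely in degree $t$; it asks only that $m(J) = t$, i.e.\ that the maximum degree of a minimal generator equal $t$. Once this is recognized, the construction above is immediate, with no combinatorics required. If a more stringent variant were wanted---say, $J$ generated solely in degree $t$, matching the parenthetical phrase ``defined by monomials of degree $s$'' appearing earlier in the paper---the main obstacle would be to choose a subset $N \subsetneq S_t$ of ``socle monomials'' such that every degree-$(t-1)$ monomial divides some element of $N$ while no degree-$(t+1)$ monomial has all of its degree-$t$ divisors inside $N$. This is combinatorially more delicate, but still tractable: for instance, when $t = 3$ one can take $N$ to be the set of monomials $x_i^2 x_j$ corresponding to arrows $(i,j)$ of a sink-free tournament on $\{1, \ldots, m\}$, and the $2$-cycle-free condition rules out the problematic divisor configurations of degree $4$.
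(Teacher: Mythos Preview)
Your construction is correct for the lemma \emph{as literally stated}: $J=(x_1^t,x_2^2,x_3,\ldots,x_m)$ is a monomial ideal with $m(J)=t$ whose quotient is an Artinian complete intersection, hence Gorenstein (so level) of socle degree $t$. You are also right that the statement, read verbatim, does not require equigeneration.

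However, the paper's proof establishes the stronger fact that $J$ is generated \emph{purely} in degree $t$, and this is exactly what is invoked in the proof of the subsequent theorem: from $J_i=0$ for $i<t$ one deduces $(I_X)_i=0$ for $i<t$ (Hartshorne's distraction preserves the Hilbert function), hence $I_i=0$ for $i<t$, which is what forces $A=R/I$ to be compressed and $m(I)=t$. With your $J$, the lifted ideal $I_X$ would contain linear and quadratic forms, and the resulting Gorenstein algebra would not be compressed. So although your argument formally answers the lemma, it does not supply what the rest of the paper needs; the lemma is under-stated, as you suspected.

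The paper's approach is therefore genuinely different. For $t=2u$ even it takes $J=(X^2+Y^2)^u$ for a two-block partition $X,Y$ of the variables; for $t=2u+1$ odd it uses a three-block partition $X,Y,Z$ and sets
\[
J=X(X^2+Y^2)^u+Y(Y^2+Z^2)^u+Z(Z^2+X^2)^u+XYZ\bigl((X+Y)^2+Z^2\bigr)^{u-1}.
\]
In both cases every minimal generator has degree exactly $t$, and the verification that $\soc(S/J)$ lives only in degree $t$ is carried out by a parity analysis on the multi\-degrees. Your tournament sketch for the equigenerated variant is in the right spirit but handles only $t=3$; the paper's bigraded/trigraded parity trick works uniformly for all $t\ge 2$ and $m\ge 3$.
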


\begin{proof}
  We start by considering the even case $t=2u$ with $u \geq 1$. Given
  $m \geq 3$, we let $j$ be an integer such that $1 \leq j < m$. We
  set $X=(x_1,\ldots,x_j)$, $Y=(x_{j+1},\ldots,x_m)$ and $\m=X+Y$. We
  claim that the ideal $J=(X^2+Y^2)^u$ satisfies the desired
  conditions. It is clear that $J$ is a monomial ideal generated in
  degree $2u=t$. In order to show that $A=S/J$ is level of socle
  degree $t$, we first set up some notation: we let
  $\xx=x_1\cdots x_j$, and given $\aa=(a_1,\ldots,a_j) \in \NN^j$ and
  we set $\xx^{\aa} = x_1^{a_1} \cdots x_j^{a_j}$. Similarly we let
  $\yy=x_{j+1}\cdots x_m$, and given
  $\bb=(b_{j+1},\ldots,b_m) \in \NN^{m-j}$ we set
  $\yy^{\bb} = x_{j+1}^{b_{j+1}} \cdots x_m^{b_{m}}$. Note that $S$ is
  bigraded by setting $\deg(x_i)=(1,0)$ for all $x_i \in X$, and
  $\deg(x_i) = (0,1)$ for all $x_i \in Y$. In this way, if we let
  $a=|\aa| = \sum_{i=1}^j a_i$ and $b=|\bb| = \sum_{i=j+1}^m b_i$,
  then the monomial $\xx^\aa\yy^\bb$ is bigraded of degree
  $(a,b)$. Observe that the generators of $J$ are precisely the monomials
  $\xx^\aa\yy^\bb$ of degree $a+b=t$ and such that both $a$ and $b$
  are even.

  To show that $A$ is level with $\socdeg(A)=t$ we first prove that
  $\m^{t+1} \subseteq J$. Let $\xx^\aa\yy^\bb$ be a monomial of degree
  $a+b \geq t+1 = 2u+1$. It is easy to see that there exist vectors
  $\aa' \leq \aa$ and $\bb' \leq \bb$ (where the inequality is
  intended point-wise) such that $|\aa'| =2h$ and $|\bb'|=2(u-h)$ for
  some $h \in \NN$. It follows that
  $\xx^\aa \in (\xx^{\aa'}) \subseteq X^{2h} = (X^2)^h$ and
  $\yy^\bb \in (\yy^{\bb'}) \subseteq Y^{2(u-h)} =
  (Y^2)^{u-h}$. Therefore, we conclude that
  $\xx^\aa\yy^\bb \in (X^2)^h(Y^2)^{u-h} \subseteq J$. This shows that
  $\socdeg(A) \leq t$. Now we show that $\socdeg(A) \geq t$. Since $J$
  is a monomial ideal, so is $J:\m$. Therefore, by degree
  considerations we only have to show that no monomial of degree
  $t-1=2u-1$ belongs to $J:\m$. But if $\xx^\aa \yy^\bb$ has degree
  $a+b=t-1$, then either $a$ is odd and $b$ is even or vice
  versa. Assume without loss of generality that we are in the first
  case; if by contradiction we assume that $\xx^\aa \yy^\bb \in J:\m$,
  then $\xx^\aa \yy^\bb \cdot x_{j+1} = \xx^\aa\yy^{\bb'} \in J$,
  where $\bb'= \bb + (1,0\ldots,0)$. By degree considerations,
  $\xx^\aa\yy^{\bb'}$ must then be one of the minimal generators of
  $J$; however, this contradicts our previous characterization of such
  generators, since $a$ is odd. This concludes the proof in the case
  $t$ is even.

  We now focus on the case in which $t=2u+1$ is odd, with $u \geq
  1$. Given $m \geq 3$, we let $j,k$ be integers such that
  $1 \leq j < k \leq m$, and we let
  $X=(x_1,\ldots,x_j), Y=(x_{j+1},\ldots,x_k)$, and
  $Z=(x_{k+1},\ldots,x_m)$. We also let $\m=X+Y+Z$. We claim that the
  ideal
  $J=X(X^2+Y^2)^u+Y(Y^2+Z^2)^u+Z(Z^2+X^2)^u+XYZ((X+Y)^2+Z^2)^{u-1}$
  satisfies the desired conditions. It is again clear that $J$ is a
  monomial ideal generated in degree $t$.

  To show that $A$ is level of socle degree $t$, we consider a
  trigrading on $S$ by setting $\deg(x_i)=(1,0,0)$ for $x_i \in X$,
  $\deg(x_i) = (0,1,0)$ for $x_i \in Y$, and $\deg(x_i)=(0,0,1)$ for
  $x_i \in Z$. We let $\xx=x_1\cdots x_j$, $\yy=x_{j+1} \cdots x_k$
  and $\zz=x_{k+1}\cdots x_m$. Given vectors $\aa,\bb$ and $\cc$ of
  appropriate length and with non-negative entries, we set
  $\xx^\aa, \yy^\bb$ and $\zz^\cc$ in analogy with the previous case.

  Observe that the minimal monomial generators of $J$ are all the
  monomials $\xx^\aa\yy^\bb\zz^\cc$ of degree $a+b+c=2u+1$ which
  satisfy one of the following four conditions:
  \begin{enumerate}
  \item $c=0$, $a$ is odd and $b$ is even.
  \item $a=0$, $b$ is odd and $c$ is even.
  \item $b=0$, $c$ is odd and $a$ is even.
  \item $abc \ne 0$, $a+b$ is even and $c$ is odd.
  \end{enumerate}
  We start by showing that $\m^{t+1} \subseteq J$. Assume that
  $\xx^\aa\yy^\bb\zz^\cc$ has degree $a+b+c \geq t+1 = 2(u+1)$; we
  distinguish some cases: if $abc \ne 0$, then we can find vectors
  $\aa' \leq \aa$, $\bb' \leq \bb$ and $\cc' \leq \cc$ with
  $|\aa'|+|\bb'|+|\cc'| = 2u-1$ and such that
  $\xx^\aa\yy^\bb\zz^\cc \in XYZ \cdot
  (\xx^{\aa'}\yy^{\bb'}\zz^{\cc'})$. We have already shown in the 
  case $t = 2(u-1)$ above, that $\m^{2u-1} \subseteq ((X+Y)^2+Z^2)^{u-1}$, and in
  particular we have that
  $\xx^{\aa'}\yy^{\bb'}\zz^{\cc'} \in ((X+Y)^2+Z^2)^{u-1}$. It follows
  that $\xx^\aa\yy^\bb\zz^\cc \in J$ in this case. Now assume that
  $abc=0$. If $a=0$, then since $b+c \geq 2(u+1)$ we can find
  $\bb' \leq \bb$ and $\cc' \leq \cc$ such that $|\bb'|$ is odd,
  $|\cc'|$ is even, and $|\bb'|+|\cc'| = 2u+1$. It follows that
  $\xx^\aa\yy^\bb\zz^\cc \in (\yy^{\bb'}\zz^{\cc'}) \subseteq
  Y(Y^2+Z^2)^u \subseteq J$. The remaining cases in which either $b=0$
  or $c=0$ are handled similarly. This concludes the proof that
  $\socdeg(A) \leq t$.

  To see that equality holds, as above it suffices to show that no
  monomial $w=\xx^\aa\yy^\bb\zz^\cc$ of degree $a+b+c=t-1=2u$ belongs
  to $J:\m$. Assume by way of contradiction that such a monomial $w$
  belongs to $J:\m$. If two of the exponents $a,b,c$ are equal to
  zero, for instance if $a=2u$ and $b=c=0$, then observe that
  $w \cdot x_{j+1} \in J$ must be a minimal monomial generator of $J$,
  by degree considerations. But
  $w \cdot x_{j+1} = \xx^\aa \cdot x_{j+1} = \xx^\aa\yy^{\bb'}\zz^{\bf
    0}$ with $\bb'=(1,0,\ldots,0)$ does not belong to $J$, thanks to
  the above characterization of the minimal generators of $J$. This
  contradicts our choice of $w \in J:\m$. The cases in which $a=b=0$
  or $a=c=0$ are handled similarly. Now assume that only one among
  $a,b,c$ is equal to zero. For instance, say $a=0$ and $b+c=2u$. If
  $b$ is even, then we consider
  $w\cdot x_{k+1} = \xx^{\bf 0}\yy^{\bb}\zz^{\cc'}$ with
  $\cc'=\cc+(1,0,\ldots,0)$, which must be a minimal generator of $J$,
  necessarily of type (2). However $b$ is even, again a
  contradiction. If $b$ is odd, then we consider
  $w \cdot x_{j+1} = \xx^{\bf 0}\yy^{\bb'}\zz^\cc$ with $|\bb'| = b+1$
  odd, and we reach again a similar contradiction. The other cases in
  which only one of the three exponents is equal to zero are tackled
  in a similar fashion.

  Finally, assume that $abc\ne 0$. If $c$ is odd, then
  $w \cdot x_{k+1} = \xx^{\aa}\yy^\bb\zz^{\cc'}$ with $|\cc'| = c+1$
  must be a minimal generator of $J$, necessarily of type (4); but
  $c+1$ is even, a contradiction. If $c$ is even, then so is
  $a+b = 2u-c$. Then $w \cdot x_1 = \xx^{\aa'}\yy^\bb\zz^\cc$ with
  $|\aa'| = a+1$ must be a minimal generator of $J$ of type (4);
  however $|\aa'| + |\bb| = a+1+b$ is odd, a contradiction.
\end{proof}

\begin{theorem}
  \label{generation}
  For all odd integers $s=2t-1 \geq 3$ and all $n\ge 4$ there exists
  an Artinian compressed Gorenstein algebra $A= K[x_1,\ldots,x_n]/I$
  with socle degree $s$ and such that $m(I)=t$.
\end{theorem}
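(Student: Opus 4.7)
The plan is to translate the problem via Macaulay's inverse system: constructing an Artinian Gorenstein $K$-algebra $A = R/I$ of socle degree $s = 2t-1$ with $m(I) = t$ amounts to exhibiting a homogeneous form $F \in B = K[y_1, \ldots, y_n]$ of degree $s$ whose apolar algebra $A_F = R/I_F$ is compressed and has $m(I_F) = t$.

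First, I apply Lemma \ref{lemma:1} with $m = n - 1 \geq 3$ to obtain a monomial ideal $J \subseteq S = K[x_1, \ldots, x_{n-1}]$ with $m(J) = t$ such that $S/J$ is Artinian level of socle degree $t$. By Macaulay duality, $S/J$ corresponds to a $d$-dimensional subspace $V \subseteq (B_{n-1})_t$, spanned by forms $F_1, \ldots, F_d$; the level hypothesis ensures that $V$ is concentrated in degree $t$ and that the apolar pairing $S_i \times V \to (B_{n-1})_{t-i}$ is maximally non-degenerate.

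Next, I lift this level datum to a single Gorenstein form in $B_s$ by choosing forms $M_i \in B_{t-1}$ involving the new variable $y_n$ and setting
\[
F := \sum_{i=1}^d M_i \cdot F_i.
\]
The choice of the $M_i$'s is the ``tricky'' step advertised before the statement; a natural candidate is $M_i = \ell_i^{t-1}$ for sufficiently generic linear forms $\ell_i \in B_1$ with nonzero $y_n$-coefficient, but the precise recipe must be tailored to the monomial structure of $J$. Granted such a choice, I verify that (i) the Hilbert function of $A_F$ attains the compressed value $\min\{\binom{n+k-1}{k}, \binom{n+s-k-1}{s-k}\}$ in every degree $k$ by computing $R_k \circ F$ via the Leibniz rule and using the level structure of $S/J$ to control ranks, and (ii) the apolar ideal $I_F$ is generated in degree $t$, i.e., $R_1 \cdot (I_F)_t = (I_F)_{t+1}$.

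The main obstacle is (ii). For a generic compressed Gorenstein algebra of odd socle degree, extra generators in degree $t+1$ (the invariant $b_1$ in Boij's conjecture) can a priori appear, and Kunte's counterexamples show that the analogous phenomenon does occur in the middle of the resolution. To rule out extra degree-$(t+1)$ generators of $I_F$, I will exploit the explicit monomial shape of $J$ from Lemma \ref{lemma:1}---in particular the fact that $m(J) = t$---together with the carefully chosen $M_i$'s, to show that every apolar syzygy of $F$ in degree $t+1$ is forced to be a multiple of one in degree $t$. Once this is established, the Gorenstein symmetry of the Betti table of $A_F$ (equivalently, the almost-linear shape of the resolution forced in the compressed setting by Proposition \ref{prop compressed}) propagates the conclusion to all higher degrees, completing the proof.
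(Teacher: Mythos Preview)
Your proposal contains a genuine gap at precisely the point you yourself identify as ``the main obstacle.'' You never actually specify the forms $M_i$, writing only that ``the precise recipe must be tailored to the monomial structure of $J$'' and then proceeding with ``Granted such a choice.'' But the entire content of the theorem is that such a choice exists, and your sketch gives no mechanism to verify either (i) that $A_F$ is compressed or (ii) that $(I_F)_{t+1} = R_1 \cdot (I_F)_t$. The suggestion $M_i = \ell_i^{t-1}$ for generic $\ell_i$ is not accompanied by any argument, and there is no reason to expect that a generic choice would avoid degree-$(t+1)$ generators---after all, the generic compressed Gorenstein algebra is exactly the object whose behavior is in question. Your closing appeal to Proposition~\ref{prop compressed} is also misplaced: that proposition bounds $t_i^A(K)$ assuming one already knows $m(I)$, so it cannot be used to show $m(I)=t$; and there is nothing to ``propagate,'' since a compressed Gorenstein algebra of socle degree $2t-1$ already has $m(I)\le t+1$, so the only issue is ruling out generators in degree $t+1$.

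The paper's proof takes a completely different route that sidesteps the need to control apolar syzygies directly. After invoking Lemma~\ref{lemma:1} as you do, it lifts $S/J$ via Hartshorne's construction to a reduced set of points $X\subseteq\mathbb P^{n-1}$ whose Artinian reduction modulo $x_n$ recovers $S/J$; the ideal $I_X$ is then generated in degree $t$ because $J$ is. The canonical module $\omega_X$ is embedded as an ideal $\a\subseteq R_X$ in degree $t$ (possible by \cite{Kr00} since $\dim_K (R_X)_t=\deg X$), and one sets $A=R_X/\a$. The preimage $I\subseteq R$ of $\a$ is generated in degree $t$ because both $I_X$ and $\a$ are; $A$ is Gorenstein by construction, has socle degree $2t-1$ by a regularity computation, and is compressed because $I$ has no generators below degree $t$. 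This geometric detour converts the delicate syzygy question you left open into structural facts about canonical ideals of points.
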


\begin{proof}
  From Lemma~\ref{lemma:1} we can construct a monomial ideal
  $J \subseteq S= K[x_1,\dots,x_{n-1}]$ with $m(J)=t$ and $\soc(S/J)$
  concentrated only in degree $t$. From Hartshorne's construction
  \cite[Thm. 4.9]{Hartshorne}, there is a reduced set of points $X$ in
  $\mathbb P^{n-1}$ such that the Artinian reduction
  $K[x_1,\dots,x_n]/(I_X + (x_n))$ is isomorphic to $S/J$. We will now
  construct the desired Artinian Gorenstein algebra as a quotient of
  the homogenous coordinate ring  $R_X =K[x_1,\dots,x_n]/I_X$ by a
  \emph{doubling} \cite[\S 2.5]{KKRSSY}. There are many ways to describe this. One is to use
  the   canonical module $\omega_X$ which can be embedded as an ideal in
  $\a\subseteq R_X$ since $R_X$ is generically
  Gorenstein~\cite[Prop. 3.3.18 (a)]{BrunsHerzog}. This can be done with
  initial degree $i$ for all $i$ where $\dim_K((R_X)_i) = \deg(X)$ holds \cite[Corollary 1.7 and
  Proposition 1.9]{Kr00}. Since $t$ is the socle degree of the
  Artinian reduction $S/J$ of $R_X$ we have that $\reg(R_X)=t$, and hence  $\dim_K((R_X)_t) =\deg(X)$. 
  Let $\a$ be a canonical ideal with
  initial degree $t$. Since $S/J$ is level, $\omega_X$ is generated in
  a single degree. If we
  let $I \subseteq R=K[x_1,\ldots,x_n]$ be the ideal such that
  $I/I_X=\a$, then since both $\a \subseteq R_X$ and $I_X \subseteq R$
  are generated in degree $t$ we conclude that $I$ is generated in
  degree $t$ as well. Thus $A=R_X/\a \cong R/I$ is an Artinian
  Gorenstein $K$-algebra with defining ideal $I$ generated only in
  degree $t$ ~\cite[Prop. 3.3.18 (b)]{BrunsHerzog}. We now claim that $\socdeg(A) = 2t-1$. In fact, $\omega_X$ has
  regularity $\dim(R_X)=1$ and it is generated in degree $1-t=\dim(R_X)-\reg(R_X)$ \cite[13.4.7]{BrodmannSharp}. Since $\a$ is a canonical ideal generated in
  degree $t$ we therefore have that $\a \cong \omega_X(1-2t)$, and thus $\reg(\a)=2t$. In particular, $\dim_K(\a_i) = \deg(X)$ if and only if
  $i \geq 2t$. It follows from the graded short exact sequence $0 \to \a \to R_X \to A \to 0$ that $A_i=0$ for all $i \geq 2t$, while
  $A_{2t-1} \ne 0$. This shows the claim. Finally, since $I$ has no generators in degree
  less than $t$, we conclude that $\HF_A(i) = \binom{n+i-1}{i}$ for all $0 \leq i <t$. If we let $s=2t-1$, by symmetry of the Hilbert function of an Artinian Gorenstein $K$-algebra we get that $\HF_A(i) = \HF_A(s-i) = \binom{n+s-i-1}{s-i}$ for all $t \leq i \leq s$. Thus, $A$ is compressed.
\end{proof}

\begin{remark}
  The construction of Theorem \ref{generation} also works for $n=3$
  and $s$ an even integer. However, we point out that for $s =2t-1$
  with $t \geq 3$ odd there is no Artinian Gorenstein compressed
  algebra $K[x_1,x_2,x_3]/I$ generated only in degree $t$. In fact, if
  such an algebra existed, the ideal $I$ would have an even number of
  generators, contradicting the structure theorem of Buchsbaum and
  Eisenbud for ideals of codimension three defining Gorenstein rings
  \cite[Corollary 2.2]{BE}. For this reason we assume $n\ge 4.$
\end{remark}

As a consequence of the above results, we may conclude that Boij's
conjecture holds true for the first Betti number. 

\begin{corollary} \label{generic} Let $A$ be a generic Artinian Gorenstein
  $K$-algebra with $\edim(A) \geq 4$ and socle degree $s \geq 3$. Let $A=R/I$ be a minimal presentation. Then $I$ is generated in degree $\lfloor \frac{s}{2} \rfloor +1$.
\end{corollary}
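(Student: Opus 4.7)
My plan is to split the argument according to the parity of $s$. When $s$ is even, the conclusion is immediate: every Artinian compressed Gorenstein $K$-algebra of socle degree $s=2(t-1)$ has an almost linear minimal free resolution over $R$ by \cite[Example 4.7]{Iarrobino}, so its defining ideal is minimally generated only in degree $t = s/2+1 = \lfloor s/2 \rfloor + 1$. Since generic Artinian Gorenstein $K$-algebras are compressed by \cite[Proposition 3.12]{IK}, the even case follows with no further work.

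For odd socle degree $s=2t-1 \geq 3$, I would combine the structural result of Boij recalled in the text with the existence statement of Theorem \ref{generation}. Concretely, let $F \in B_s$ vary and consider the associated compressed Artinian Gorenstein algebra $A_F = R/I_F$. By \cite[Proposition 3.2]{B99}, for $F$ in a non-empty Zariski open subset of $B_s$ the ideal $I_F$ is generated in degrees $t$ and $t+1$ only, and the numbers $a_1=a_1(F)$ and $b_1=b_1(F)$ of generators in these two degrees are constant on a (possibly smaller) Zariski open subset, by Boij's semicontinuity argument from \cite{B99}. Equivalently, the condition $b_1(F) = 0$ is the rank-maximality condition on the multiplication map
\[
\mu_F \colon R_1 \otimes_K (I_F)_t \longrightarrow (I_F)_{t+1},
\]
and since $\dim_K(I_F)_t$ and $\dim_K(I_F)_{t+1}$ are fixed by the (constant) Hilbert function of a compressed Gorenstein algebra, the locus $\{F : b_1(F) = 0\}$ is a Zariski open subset of $B_s$.

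The remaining point, which is exactly what Theorem \ref{generation} provides, is that this open set is non-empty: the algebra $A=R/I$ constructed there is compressed Gorenstein of socle degree $s=2t-1$ with $m(I)=t$, and via Macaulay duality it corresponds to some $F_0 \in B_s$ with $A_{F_0} \cong A$ and $b_1(F_0)=0$. Hence the open condition $b_1=0$ is satisfied on a non-empty Zariski open subset of $B_s$, and so it holds for every generic $F$. This gives $m(I_F) = t = \lfloor s/2 \rfloor + 1$ for the generic Artinian Gorenstein $K$-algebra of socle degree $s$, concluding the odd case.

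The main obstacle I expect is bookkeeping rather than conceptual: one must be careful that the numerical invariants $a_1, b_1$ are indeed governed by the Hilbert function together with the upper semicontinuity of the rank of $\mu_F$, so that non-emptiness of the open locus coming from Theorem \ref{generation} really forces the generic value of $b_1$ to be zero (and not merely bounded above). Once this semicontinuity is in place, the proof is a direct combination of the existence result (Theorem \ref{generation}) with the structural classification of generators of compressed Gorenstein algebras.
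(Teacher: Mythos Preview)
Your proposal is correct and follows essentially the same approach as the paper: the paper's proof (stated in the paragraph preceding the corollary) also reduces to noting that generic implies compressed and that, by Theorem~\ref{generation}, the Zariski open locus of compressed Gorenstein algebras generated in a single degree is non-empty. You have simply made explicit the parity split and the semicontinuity argument (via the rank of $\mu_F$) that the paper leaves implicit in its appeal to Boij's work \cite{B99}.
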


\begin{proof}
  Since generic Artinian Gorenstein graded $K$-algebras are
  compressed by Theorem~\ref{compressed}, we conclude thanks to Theorem   \ref{generation} and Remark \ref{Boij Conj}. 
\end{proof}

\subsection{Rate of of generic Artinian Gorenstein algebras} { Thanks
  to Corollary \ref{generic} we have the following result.}

\begin{theorem} \label{main} Let $A$ be a generic Artinian Gorenstein
  graded $K$-algebra of socle degree $s \geq 3$, and assume that
  $\edim(A) \geq 4$. Then $\rate(A) = \lfloor \frac{s}{2} \rfloor $.
\end{theorem}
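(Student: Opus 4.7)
My plan is to assemble the result directly from the machinery developed in the previous sections, since essentially all the hard work has already been done: the bound on $t_i^A(K)$ for compressed Gorenstein algebras, the consequence that compressed Gorenstein algebras generated in one degree have minimal rate, and Corollary \ref{generic} identifying that one degree for generic $A$.

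First I would record the background facts: a generic Artinian Gorenstein $K$-algebra is compressed by \cite[Proposition 3.12]{IK}, and for any standard graded $K$-algebra $A = R/I$ with $I$ generated in degree at least two we have the lower bound $\rate(A) \geq m(I) - 1$ discussed in Section~2. These are the only external ingredients needed.

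Next I would split on socle degree. For $s = 3$, I would invoke the Conca--Rossi--Valla theorem cited in the introduction: a generic Artinian Gorenstein $K$-algebra of socle degree $3$ and embedding dimension at least $3$ is Koszul, hence $\rate(A) = 1 = \lfloor 3/2 \rfloor$. For $s \geq 4$, I would apply Corollary \ref{generic} to conclude that the defining ideal $I$ of $A$ is minimally generated in degree $\lfloor s/2 \rfloor + 1$, and then apply Corollary \ref{corollary t-generated} to deduce $\rate(A) = \lfloor s/2 \rfloor$ directly.

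There is effectively no obstacle left at this stage, since the real content lies in Theorem \ref{generation} (producing one explicit compressed Gorenstein $K$-algebra with defining ideal generated only in degree $t$) and in Proposition \ref{prop compressed} (bounding $t_i^A(K)$ in terms of $m(I)$ via the Poincar\'e series computation of Proposition \ref{Proposition PS}). The only mild point to be careful about is the degenerate corner $s=3$, which is not covered by Corollary \ref{corollary t-generated} (which required $s \geq 4$) and must be handled separately using \cite{CRV}; after that the proof of Theorem \ref{main} is a one-line citation of Corollary \ref{corollary t-generated} combined with Corollary \ref{generic}.
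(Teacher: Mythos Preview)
Your proposal is correct and follows essentially the same route as the paper: reduce to the compressed case, use Proposition~\ref{prop compressed} (packaged as Corollary~\ref{corollary t-generated}) for the upper bound, and use Corollary~\ref{generic} to identify $m(I)=\lfloor s/2\rfloor+1$. Your explicit treatment of the case $s=3$ via \cite{CRV} is in fact more careful than the paper's own proof, which invokes Proposition~\ref{prop compressed} without separating out $s=3$ even though that proposition is stated only for $s\ge 4$.
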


\begin{proof}
Let $A$ be a generic Artinian Gorenstein graded $K$-algebra, and let $A=R/I$ be a minimal presentation. If $s=3$, then $A$ is Koszul by \cite[Theorem 6.3]{CRV}, and thus $\rate(A) = 1 =\lfloor \frac{s}{2} \rfloor$. Now assume $s \geq 4$. By Theorem~\ref{compressed} $A$ is
  compressed, and hence it satisfies $\rate(A) = m(I)-1$ by Corollary \ref{corollary t-generated}. Finally, by Corollary \ref{generic},
  $m(I) = \lfloor \frac{s}{2} \rfloor+1$, and the result follows.
\end{proof}

\begin{remark} Recall that Artinian compressed Gorenstein $K$-algebras
  of even socle degree $s=2(t-1)$ have almost linear resolution and,
  in particular, they are generated in degree $t$.  Hence the
  conclusion of Theorem \ref{main} holds for any Artinian Gorenstein
  compressed $K$-algebra of even socle degree, not necessarily
  generic.
\end{remark}

\begin{remark} When $A=R/I$ is a generic Artinian Gorenstein
  $K$-algebra of odd socle degree $s=2t-1$ with $\edim(A)=3$ we have
  that
  \[
    \rate(A) = \begin{cases} t-1 & \text{ if } t \text{ is even}\\
      t & \text{ if } t \text{ is odd}
    \end{cases}.
  \]
  In fact, if $s=3$ then $A$ is Koszul by \cite[Theorem 6.3]{CRV}. If
  $s \geq 5$ then $I$ is generated in degree $t$ when $s=2t-1$ and $t$
  is even, while there is one generator in degree $t+1$ when $s=2t-1$
  and $t$ is odd. In both cases we conclude by Proposition \ref{prop
    compressed} since $A$ is compressed.
\end{remark}

\subsection*{Acknowledgments} We thank the anonymous referee for providing useful suggestions and comments. The second, third and fourth
  authors were partially supported by the PRIN 2020 project 2020355B8Y
  ``Squarefree Gröbner degenerations, special varieties and related
  topics'', by the MIUR Excellence Department Project CUP D33C23001110001, and by INdAM-GNSAGA. 

\bibliographystyle{alpha} \bibliography{References}
\end{document}